\def\doi#1{\href{https://doi.org/\detokenize{#1}}{\url{https://doi.org/\detokenize{#1}}}}
\newcommand{\lvv}[1]{\reflectbox{\ensuremath{\vv{\reflectbox{\ensuremath{#1}}}}}}
\DeclareMathOperator\PSB {PSB}
\newcommand*{\algrule}[1][\algorithmicindent]{\makebox[#1][l]{\hspace*{.5em}\thealgruleextra\vrule height \thealgruleheight depth \thealgruledepth}}%
\newcommand*{\thealgruleextra}{}
\newcommand*{\thealgruleheight}{.75\baselineskip}
\newcommand*{\thealgruledepth}{.25\baselineskip}
\def\ALG@printindent{%
	\ifnum \theALG@nested>0
	\ifx\ALG@text\ALG@x@notext
	\else
	\unskip
	\addvspace{-1pt}
	\ALG@printindent@tempcnta=1
	\loop
	\algrule[\csname ALG@ind@\the\ALG@printindent@tempcnta\endcsname]%
	\advance \ALG@printindent@tempcnta 1
	\ifnum \ALG@printindent@tempcnta<\numexpr\theALG@nested+1\relax
	\repeat
	\fi
	\fi
}%
\patchcmd{\ALG@doentity}{\noindent\hskip\ALG@tlm}{\ALG@printindent}{}{\errmessage{failed to patch}}
\newbox\statebox
\newcommand{\myState}[1]{%
	\setbox\statebox=\vbox{#1}%
	\edef\thealgruleheight{\dimexpr \the\ht\statebox+1pt\relax}%
	\edef\thealgruledepth{\dimexpr \the\dp\statebox+1pt\relax}%
	\ifdim\thealgruleheight<.75\baselineskip
	\def\thealgruleheight{\dimexpr .75\baselineskip+1pt\relax}%
	\fi
	\ifdim\thealgruledepth<.25\baselineskip
	\def\thealgruledepth{\dimexpr .25\baselineskip+1pt\relax}%
	\fi
	\State #1%
	\def\thealgruleheight{\dimexpr .75\baselineskip+1pt\relax}%
	\def\thealgruledepth{\dimexpr .25\baselineskip+1pt\relax}%
}
\begin{document}
\title{On 1-skeleton of the polytope \\ of pyramidal tours with step-backs\thanks{Supported by the P.G. Demidov Yaroslavl State University Project VIP-016}}
%
%
\author{Andrei V. Nikolaev \orcidID{0000-0003-4705-2409}}
\authorrunning{A.V. Nikolaev}
%
\institute{P.G. Demidov Yaroslavl State University, Yaroslavl, Russia\\
\email{andrei.v.nikolaev@gmail.com}}
\maketitle              
\begin{abstract}
Pyramidal tours with step-backs are Hamiltonian tours of a special kind: the salesperson starts in city 1, then visits some cities in ascending order, reaches city $n$, and returns to city 1 visiting the remaining cities in descending order. However, in the ascending and descending direction, the order of neighboring cities can be inverted (a step-back). It is known that on pyramidal tours with step-backs the traveling salesperson problem can be solved by dynamic programming in polynomial time.

We define the polytope of pyramidal tours with step-backs $\PSB(n)$ as the convex hull of the characteristic vectors of all possible pyramidal tours with step-backs in a complete directed graph. The 1-skeleton of $\PSB(n)$ is the graph whose vertex set is the vertex set of the polytope, and the edge set is the set of geometric edges or one-dimensional faces of the polytope. 
We present a linear-time algorithm to verify vertex adjacencies in 1-skeleton of the polytope $\PSB(n)$ and estimate the diameter and the clique number of 1-skeleton: the diameter is bounded above by 4 and the clique number grows quadratically in the parameter $n$.

\keywords{Pyramidal tour with step-backs \and 1-skeleton \and Vertex adjacency \and Graph diameter \and Clique number \and Pyramidal encoding.}
\end{abstract}

\section{Introduction}

The 1-\textit{skeleton} of a polytope $P$ is the graph whose vertex set is the vertex set of $P$ and edge set is the set of geometric edges or one-dimensional faces of $P$.
In this paper, we consider 3 characteristics of 1-skeleton: vertex adjacency, graph diameter, and clique number.

Two vertices of a graph $G$ are called \textit{adjacent} iff they share a common edge.
Vertex adjacency in 1-skeleton is of interest as it can be directly applied to develop simplex-like combinatorial optimization algorithms that move from one feasible solution to another along the edges of the 1-skeleton.
This class includes, for example, the blossom algorithm by Edmonds for constructing maximum matchings \cite{Edmonds1965}, set partitioning algorithm by Balas and Padberg \cite{Balas1975}, Balinski's algorithm for the assignment problem \cite{Balinski1985}, Ikura and Nemhauser's algorithm for the set packing problem \cite{Ikura1985}, etc.

The \textit{diameter} of a graph $G$ is the maximum edge distance between any pair of vertices
The study of $1$-skeleton's diameter is motivated by its relationship to the simplex-method and similar edge-following algorithms since the diameter serves as a lower bound for the number of iterations of such algorithms (see \cite{Dantzig1963,Grotchel1985}), as well as the famous Hirsch conjecture \cite{Dantzig1963,Santos2012}.

The \textit{clique number} of a graph $G$, denoted by $\omega(G)$, is the number of vertices in a maximum clique of $G$. It is known that the clique number of 1-skeleton is a lower bound for computational complexity in a class of \textit{direct-type} algorithms based on linear comparisons \cite{Bondarenko1983,Bondarenko2013}.
Besides, this characteristic is polynomial for known polynomially solvable problems and is superpolynomial for intractable problems (see, for example, \cite{Bondarenko2016,Bondarenko2017SPT,Simanchev2018}).

\section{Traveling salesperson polytope}

We consider an asymmetric traveling salesperson problem: given a complete weighted digraph $K_n=(V,E)$ (whose vertices are called \textit{cities}), it is required to find a Hamiltonian tour of minimum weight \cite{Garey1979}.
With each Hamiltonian tour $x$ in $K_{n}$ we associate a characteristic vector $\mathbf{v}(x) \in \mathbb{R}^{E}$ by the following rule:
\[
\mathbf{v}(x)_e = 
\begin{cases}
1,& \text{ if an edge } e \in E \text{ is contained in the tour } x,\\
0,& \text{ otherwise. }
\end{cases}
\]
An example of constructing a characteristic vector $\mathbf{v}(x)$ for a Hamiltonian tour $x$ is shown in Fig.~\ref{image:characteristic_vector}.

\begin{figure}[t]
	\centering
	\begin{tikzpicture}[scale=1]
	\begin{scope}[every node/.style={circle,thick,draw,inner sep=3pt}]
	\node (1) at (0,3) {1};
	\node (2) at (3,3) {2};
	\node (3) at (3,0) {3};
	\node (4) at (0,0) {4};
	\end{scope}
	\draw [->,>=stealth,thick, bend left=10] (1) edge node[above]{\scriptsize 1} (2);
	\draw [->,>=stealth, bend left=10,dashed] (2) edge node[below,pos=0.4]{\scriptsize 2} (1);
	\draw [->,>=stealth, bend left=10,dashed] (2) edge node[right]{\scriptsize 3} (3);
	\draw [->,>=stealth, bend left=10,dashed] (3) edge node[left]{\scriptsize 4} (2);
	\draw [->,>=stealth, bend right=10,dashed] (3) edge node[above,pos=0.6]{\scriptsize 5} (4);
	\draw [->,>=stealth,thick, bend right=10] (4) edge node[below]{\scriptsize 6} (3);
	\draw [->,>=stealth, bend left=10,dashed] (4) edge node[left]{\scriptsize 7} (1);
	\draw [->,>=stealth, bend left=10,dashed] (1) edge node[right]{\scriptsize 8} (4);
	\draw [->,>=stealth, bend left=10,dashed] (1) edge node[above,pos=0.3]{\scriptsize 9} (3);
	\draw [->,>=stealth,thick, bend left=10] (3) edge node[below,pos=0.3]{\scriptsize 10} (1);
	\draw [->,>=stealth, bend left=10,dashed] (4) edge node[above,pos=0.3]{\scriptsize 11} (2);
	\draw [->,>=stealth,thick, bend left=10] (2) edge node[below,pos=0.3]{\scriptsize 12} (4);
	
	\node at (1.5,-1) {$\mathbf{v}(1,2,4,3)=(1,0,0,0,0,1,0,0,0,1,0,1)$};
	\end{tikzpicture}
	\caption{An example of a characteristic vector for a Hamiltonian tour $\langle 1,2,4,3 \rangle$}
	\label{image:characteristic_vector}
\end{figure}

The polytope
\[\operatorname{ATSP}(n) = \operatorname{conv} \{\mathbf{v}(x) \ | \ x \text{ is a Hamiltonian tour in } K_n\}\]
is called \textit{the asymmetric traveling salesperson polytope}.

The \textit{symmetric traveling salesperson polytope} $\operatorname{TSP}(n)$ is defined similarly as the convex hull of characteristic vectors of all possible Hamiltonian cycles in the complete
undirected graph $K_n$.

The traveling salesperson polytope was introduced by Dantzig, Fulkerson, and Johnson in their classic work on solving the traveling salesperson problem for 49 US cities by integer linear programming \cite{Dantzig1954}.
State-of-the-art exact algorithms for the traveling salesperson problem are based on a partial description of the facets of the traveling salesperson polytope and the branch and cut method for integer linear programming \cite{Applegate2006}.

The 1-skeleton of the traveling salesperson polytope has long been the object of close attention in the field of polyhedral combinatorics.
The classic result by Papadimitriou \cite{Papadimitriou1978} states that the question of whether two vertices of the $\operatorname{ATSP}(n)$ (or $\operatorname{TSP}(n)$) are not adjacent is NP-complete. 
It is known that the graph diameter of 1-skeleton equals 2 for $\operatorname{ATSP}(n)$ \cite{Padberg1974} and is at most 4 for $\operatorname{TSP}(n)$ \cite{Rispoli1998}.
An open conjecture by Gr\"{o}tchel and Padberg states that the diameter is 2 for both polytopes \cite{Grotchel1985}.
As for the clique number of $\operatorname{ATSP}(n)$ (and $\operatorname{TSP}(n)$), Bondarenko proved that it is superpolynomial in the parameter $n$ \cite{Bondarenko1983}.
Note that, historically, the traveling salesperson polytope was the first combinatorial polytope for which both the NP-completeness of verifying the vertex non-adjacency and the superpolynomial clique number of the 1-skeleton were established.

Since vertex adjacency is a hard problem for the traveling salesperson polytope, various special cases are of interest.
In particular, Sierksma et al. \cite{Sierksma1995} studied the faces of diameter 2, Arthanari \cite{Arthanari2013} considered the pedigree polytope which provided a sufficient condition for non-adjacency in the traveling salesperson polytope, and Bondarenko et al. \cite{Bondarenko2018,Bondarenko2017} studied the polytope of pyramidal tours.

In this paper, we consider the polytope associated with Hamiltonian tours of a special kind: pyramidal tours with step-backs.

\section{Pyramidal tours}

We suppose that the cities are labeled from $1$ to $n$.
Let $\tau$ be a Hamiltonian tour.
We denote the successor of $i$-th city as $\tau (i)$, and the predecessor as $\tau^{-1} (i)$.
For any natural $k$, we denote the $k$-th successor of $i$ as $\tau^{k}(i)$, the $k$-th predecessor of $i$ as $\tau^{-k}(i)$.
The city $i$ satisfying $\tau^{-1}(i) < i$ and $\tau(i) < i$ is called a \textit{peak}.
A \textit{pyramidal tour}, introduced by Aizenshtat and Kravchuk \cite{Aizenshtat1968}, is a Hamiltonian tour with only one peak $n$.
In other words, the salesperson starts in city $1$, then visits some cities in ascending order, reaches city $n$ and returns to city $1$, visiting the remaining cities in descending order.

Enomoto, Oda, and Ota introduced a more general class of pyramidal tours with step-backs \cite{Enomoto1998}.
A \textit{step-back peak} (see Fig. \ref{Fig_step_backs}) is the city $i$, such that
\[\tau^{-1} < i, \ \tau(i) = i - 1, \ \tau^{2}(i) > i, \text{ or } \tau^{-2} > i, \ \tau^{-1}(i) = i - 1, \ \tau(i) < i.\]
A \textit{proper peak} is a peak $i$ which is not a step-back peak.
A \textit{pyramidal tour with step-backs} is a Hamiltonian tour with exactly one proper peak $n$.

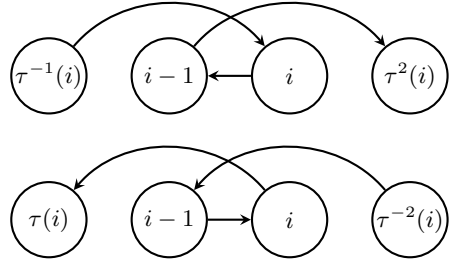
\begin{figure} [t]
	\centering
	\begin{tikzpicture}[scale=0.8]
	\begin{scope}[every node/.style={circle,thick,draw,inner sep=0pt, minimum size=1cm}]
	\node (A) at (0,0) {$\tau^{-1}(i)$};
	\node (B) at (2,0) {$i-1$};
	\node (C) at (4,0) {$i$};
	\node (D) at (6,0) {$\tau^{2}(i)$};
	
	\end{scope}
	\draw [thick,->,>=stealth] (A) edge [bend left=50] (C);
	\draw [thick,->,>=stealth] (C) edge (B);
	\draw [thick,->,>=stealth] (B) edge [bend left=50] (D);
	\end{tikzpicture}
	\\[2mm]
	\begin{tikzpicture}[scale=0.8]
	\begin{scope}[every node/.style={circle,thick,draw,inner sep=0pt, minimum size=1cm}]
	\node (A) at (0,0) {$\tau(i)$};
	\node (B) at (2,0) {$i-1$};
	\node (C) at (4,0) {$i$};
	\node (D) at (6,0) {$\tau^{-2}(i)$};
	
	\end{scope}
	\draw [thick,<-,>=stealth] (A) edge [bend left=50] (C);
	\draw [thick,<-,>=stealth] (C) edge (B);
	\draw [thick,<-,>=stealth] (B) edge [bend left=50] (D);
	\end{tikzpicture}
	
	\caption {A step-back in ascending and descending order}
	\label {Fig_step_backs}
\end{figure}

Pyramidal tours and pyramidal tours with step-backs are of interest, since, on the one hand, the minimum cost pyramidal tour (with step-backs) can be found in $O(n^2)$ time by dynamic programming, and, on the other hand, there are known restrictions on the distance matrix that guarantee the existence of an optimal tour that is pyramidal (with step-backs). See \cite{Burkard1998,Gilmore1985} for pyramidal tours, and \cite{Enomoto1998} for pyramidal tours with step-backs.


We consider a complete digraph $K_n=(V,E)$.
With each pyramidal tour (with step-backs) $x$ in $K_{n}$ we associate a characteristic vector $\mathbf{v}(x) \in \mathbb{R}^{E}$:
\[
\mathbf{v}(x)_e = 
\begin{cases}
1,& \text{ if an edge } e \in E \text{ is contained in the tour } x,\\
0,& \text{ otherwise. }
\end{cases}
\]

The polytope
\[\operatorname{PYR}(n) = \operatorname{conv} \{\mathbf{v}(x) \ | \ x \text{ is a pyramidal tour in } K_n\}\]
is called the \textit{polytope of pyramidal tours}.

The polytope
\[\operatorname{PSB}(n) = \operatorname{conv} \{\mathbf{v}(x) \ | \ x \text{ is a pyramidal tour with step-backs in } K_n\}\]
is called the \textit{polytope of pyramidal tours with step-backs}.

The polytope of pyramidal tours $\operatorname{PYR}(n)$ was introduced in \cite{Bondarenko2017} and later considered in \cite{Bondarenko2018} by Bondarenko et al. It was established that vertex adjacency in 1-skeleton of the $\operatorname{PYR}(n)$ polytope can be verified in linear time $O(n)$, the diameter of 1-skeleton equals 2, and the asymptotically exact estimate of clique number is $\Theta(n^2)$.

The polytope of pyramidal tours with step-backs was introduced in \cite{Nikolaev2019}, where a necessary and sufficient condition for vertex adjacency in the 1-skeleton of the polytope is given.
Based on this condition, we develop a linear-time algorithm to verify vertex adjacencies in the polytope $\operatorname{PSB}(n)$ and study the diameter and the clique number of the 1-skeleton.

\section{Pyramidal encoding}

Following \cite{Nikolaev2019}, we introduce a special pyramidal encoding to represent the pyramidal tours with step-backs.
With each pyramidal tour with step-backs $x$ in $K_n$ we associate a vector $\mathbf{x}^{0,1,sb}$ of length $n-2$, each coordinate corresponds to a city from $2$ to $n-1$, by the following rule:
\begin{gather*}
\mathbf{x}^{0,1,sb}_{i} =
\begin{cases}
1, &\text{ if } i \text{ is visited by } x \text{ in ascending order},\\
\lvv{ 1 \ 1}, &\text{ if } i \text{ is a step-back peak in ascending order},\\
0, &\text{ if } i \text{ is visited by } x \text{ in descending order},\\
\vv{0 \ 0}, &\text{ if } i \text{ is a step-back peak in descending order}.
\end{cases}
\end{gather*}
Note that a step-back on $i$ also involves the previous coordinate $i-1$.
An example of a pyramidal tour with step-backs and the corresponding encoding vector $\mathbf{x}^{0,1,sb}$ is shown in Fig.~\ref{Fig_encoding}.

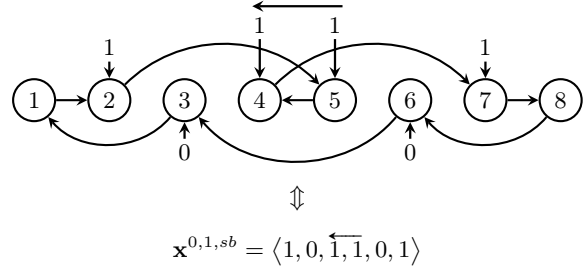
\begin{figure}[t]
	\centering
	\begin{tikzpicture}[scale=1.0]
	\begin{scope}[every node/.style={circle,thick,draw,inner sep=3pt}]
	\node (A) at (0,0) {1};
	\node (B) at (1,0) {2};
	\node (C) at (2,0) {3};
	\node (D) at (3,0) {4};
	\node (E) at (4,0) {5};
	\node (F) at (5,0) {6};
	\node (G) at (6,0) {7};
	\node (H) at (7,0) {8};
	\end{scope}
	\draw [thick,->,>=stealth] (A) edge (B);
	\draw [thick,->,>=stealth] (B) edge [bend left=45] (E);
	\draw [thick,->,>=stealth] (E) edge (D);
	\draw [thick,->,>=stealth] (D) edge [bend left=45] (G);
	\draw [thick,->,>=stealth] (G) edge (H);
	\draw [thick,->,>=stealth] (H) edge [bend left=50] (F);
	\draw [thick,->,>=stealth] (F) edge [bend left=50] (C);
	\draw [thick,->,>=stealth] (C) edge [bend left=50] (A);
	
	\node at (3.5,-1.3) {$\Updownarrow$};
	\node at (3.5,-2) {$\mathbf{x}^{0,1,sb} = \left\langle 1,0,\lvv{1,1},0,1 \right\rangle$};
	
	\begin{scope}[every node/.style={circle,inner sep=1pt}]
	\node (J) at (2,-0.7) {0};
	\node (K) at (3,1.0) {1};
	\node (L) at (4,1.0) {1};
	\node (M) at (5,-0.7) {0};
	\node (N) at (6,0.7) {1};
	\node (O) at (1,0.7) {1};
	\end{scope}
	
	\draw [->,>=stealth,line width=0.3mm] (J) edge (C);
	\draw [->,>=stealth,line width=0.3mm] (K) edge (D);
	\draw [->,>=stealth,line width=0.3mm] (L) edge (E);
	\draw [->,>=stealth,line width=0.3mm] (M) edge (F);
	\draw [->,>=stealth,line width=0.3mm] (N) edge (G);
	\draw [->,>=stealth,line width=0.3mm] (O) edge (B);
	
	\draw [<-,>=stealth,line width=0.3mm] (2.9,1.25) -- (4.1, 1.25);
	\end{tikzpicture}
	\caption {An example of a tour and the corresponding pyramidal encoding}
	\label {Fig_encoding}
\end{figure}

We denote by $\mathbf{x}^{0,1,sb}_{[i,j]}$ a fragment of encoding on coordinates from $i$ to $j$.
The superscript indicates what we consider in the encoding: descending order ($0$),
ascending order ($1$), or step-backs ($sb$).
For example, $\mathbf{x}^{1,sb}_{[i,j]}$ means a fragment of the encoding only in ascending order taking into account step-backs; $\mathbf{x}^{0,1}_{[i,j]}$ -- a fragment of the encoding disregarding step-backs, etc.

\section{Vertex adjacency}

We consider 12 blocks of the following form (a wavy line means that the corresponding coordinate can either contain a step-back or not):
\begin{gather*}
U_{11} = \left\langle \begin{matrix} 1 \\ 1 \end{matrix} \right\rangle,\
U_{00} = \left\langle \begin{matrix} 0 \\ 0 \end{matrix} \right\rangle,\
U_{1111} = \left\langle \begin{matrix} \lvv {1 \ \ 1} \\ \lvv {1 \ \ 1} \end{matrix} \right\rangle,\
U_{0000} = \left\langle \begin{matrix} \vv {0 \ \ 0} \\ \vv {0 \ \ 0} \end{matrix} \right\rangle,\\
L_{1110} = \left\langle \begin{matrix} \lvv {1 \ \ 1} \\ 1 \ \ \tilde{0} \end{matrix} \right\rangle,\
L_{1011} = \left\langle \begin{matrix} 1 \ \ \tilde{0} \\ \lvv {1 \ \ 1} \end{matrix} \right\rangle,\
L_{0001} = \left\langle \begin{matrix} \vv {0 \ \ 0} \\ 0 \ \ \tilde{1} \end{matrix} \right\rangle,\
L_{0100} = \left\langle \begin{matrix} 0 \ \ \tilde{1} \\ \vv {0 \ \ 0} \end{matrix} \right\rangle,\\
R_{1101} = \left\langle \begin{matrix} \lvv {1 \ \ 1} \\ \tilde{0} \ \ 1 \end{matrix} \right\rangle,\
R_{0111} = \left\langle \begin{matrix} \tilde{0} \ \ 1 \\ \lvv {1 \ \ 1} \end{matrix} \right\rangle,\
R_{0010} = \left\langle \begin{matrix} \vv {0 \ \ 0} \\ \tilde{1} \ \ 0 \end{matrix} \right\rangle,\
R_{1000} = \left\langle \begin{matrix} \tilde{1} \ \ 0 \\ \vv {0 \ \ 0} \end{matrix} \right\rangle.
\end{gather*}

\begin{theorem}[Nikolaev \cite{Nikolaev2019}] \label{Theorem:adjacency}
	Vertices $\mathbf{v}(x)$ and $\mathbf{v}(y)$ of the polytope $\operatorname{PSB}(n)$ are not adjacent if and only if the following conditions are satisfied.
	\begin{itemize}
		\item There exists a city $i$ (called a \textit{left block}) such that the tours $x$ and $y$ on the coordinate $i$ (coordinates $i$ and $i+1$ for double blocks) in the pyramidal encoding have the form of $U,L$, or $i = 1$.
		
		\item There exists a city $j$ (called a \textit{right block}) such that the tours $x$ and $y$ on the coordinate $j$ (coordinates $j-1$ and $j$ for double blocks) in the pyramidal encoding have the form of $U,R$, or $j = n$.
		
		We denote by $i_a$ the first city after the left block: $i_a = i+1 $ for single blocks and $i_a = i+2$ for double blocks.
		We denote by $j_b$ the last city before the right block: $j_b = i-1$ for single blocks and $j_b = j-2$ for double blocks.
		
		Two blocks cut the encoding of the tours into three parts: the left (less than $i_a$), the central (from $i_a$ to $j_b$), and the right (larger than $j_b$).
		
		\item In the central part, the coordinates of $\mathbf{x}^{0,1}$ and $\mathbf{y}^{0,1}$ completely coincide: $\mathbf{x}^{0,1}_{[i_a,j_b]} = \mathbf{y}^{0,1}_{[i_a,j_b]}$.
		
		We say that two tours
		\begin{itemize}
			\item differ in the left part if $\mathbf{x}^{0,1,sb}_{[1,i_a-1]} \neq \mathbf{y}^{0,1,sb}_{[1,i_a-1]}$,
			\item differ in the right part if $\mathbf{x}^{0,1,sb}_{[j_b+1,n]} \neq \mathbf{y}^{0,1,sb}_{[j_b+1,n]}$,
			\item differ in the central part in ascending order if $\mathbf{x}^{1,sb}_{[i_a,j_b]} \neq \mathbf{y}^{1,sb}_{[i_a,j_b]}$,
			\item differ in the central part in descending order if $\mathbf{x}^{0,sb}_{[i_a,j_b]} \neq \mathbf{y}^{0,sb}_{[i_a,j_b]}$.
		\end{itemize}
		
		The remaining conditions are divided into four cases depending on the values of $\mathbf{x}^{0,1}_i$ and $\mathbf{x}^{0,1}_j$.
		
		\begin{enumerate}
			\item If $\mathbf{x}^{0,1}_{i} = \mathbf{x}^{0,1}_{j} = 1$, then the tours differ
			\begin{itemize}
				\item in the central part in ascending order;
				\item in the left part, or in the central part in descending order, or in the right part.
			\end{itemize}
			
			\item If $\mathbf{x}^{0,1}_{i} = \mathbf{x}^{0,1}_{j} = 0$, then the tours differ
			\begin{itemize}
				\item in the central part in descending order;
				\item in the left part, or in the central part in ascending order, or in the right part.
			\end{itemize}
			
			\item If $\mathbf{x}^{0,1}_{i} = 1, \mathbf{x}^{0,1}_{j} = 0$, then the tours differ	
			\begin{itemize}
				\item in the central part in ascending order or in the right part;
				\item in the central part in descending order or in the left part.
			\end{itemize}
			
			\item If $\mathbf{x}^{0,1}_{i} = 0, \mathbf{x}^{0,1}_{j} = 1$, then the tours differ
			
			\begin{itemize}
				\item in the central part in descending order or in the right part;
				\item in the central part in ascending order or in the left part.
			\end{itemize}
		\end{enumerate}
		
		Cities $1$ and $n$ can be considered in the encoding as visited in ascending or descending order, if required.
	\end{itemize}
\end{theorem}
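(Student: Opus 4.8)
The plan is to start from the standard description of edges of a polytope: distinct vertices $\mathbf v(x),\mathbf v(y)$ of $\operatorname{PSB}(n)$ are adjacent if and only if the segment $[\mathbf v(x),\mathbf v(y)]$ is a one-dimensional face, equivalently the midpoint $\tfrac12(\mathbf v(x)+\mathbf v(y))$ does not lie in the convex hull of the remaining vertices. To make this tractable I would first pass to a small face. Put $c_e=0$ for $e\in x\cap y$, $c_e=1$ for $e\in x\triangle y$, and $c_e=M$ for a large $M$ on edges outside $x\cup y$. Since $x$ and $y$ are Hamiltonian tours of equal length, $c\cdot\mathbf v(x)=c\cdot\mathbf v(y)=|x\setminus y|$, and one checks this is the minimum of $c$ over $\operatorname{PSB}(n)$; its minimal face $F$ is the convex hull of the \emph{repartition family}
\[
R=\{\,z:\ x\cap y\subseteq z\subseteq x\cup y,\ z\text{ is a pyramidal tour with step-backs}\,\}.
\]
Because $F$ is a face, $\mathbf v(x)$ and $\mathbf v(y)$ are adjacent in $\operatorname{PSB}(n)$ if and only if they are adjacent in $F=\operatorname{conv}(R)$, and the whole problem reduces to understanding how the edges of the symmetric difference $x\triangle y$ can be reassembled, together with the common edges $x\cap y$, into valid tours.

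Projecting onto the coordinates of $x\triangle y$, every $z\in R$ becomes the indicator of $z\cap(x\triangle y)$ and the midpoint projects to $\tfrac12\mathbf 1_{x\triangle y}$. I would then show, using the pyramidal encoding, that $x\triangle y$ decomposes into segments that can be rerouted independently: away from step-backs a tour is fixed by the ascending/descending label ($1$ or $0$) of each city, so the coordinates on which $\mathbf x^{0,1}$ and $\mathbf y^{0,1}$ differ, together with the step-back double coordinates, organise $x\triangle y$ into locally flippable pieces. The twelve blocks $U,L,R$ are precisely the local configurations of the pair $(\mathbf x,\mathbf y)$ at which a reroute can begin or end without disturbing the edges that cross the boundary; this is why the theorem asks for a left block and a right block, splitting the encoding into left, central, and right parts. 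In these terms non-adjacency is equivalent to the existence of an \emph{exchange pair}: tours $x',y'\in R$ with $\{x',y'\}\neq\{x,y\}$ and $\mathbf v(x')+\mathbf v(y')=\mathbf v(x)+\mathbf v(y)$, i.e. a partition of $x\triangle y$ into two valid tours different from the one given by $x$ and $y$ themselves.

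For the sufficiency of the listed conditions I would build an exchange pair explicitly. Given the left block $i$ and the right block $j$, cut both encodings into the three parts and swap selected fragments between $x$ and $y$; the four cases on $(\mathbf x^{0,1}_i,\mathbf x^{0,1}_j)$ prescribe which fragments are swapped so that the result is balanced. For instance, when both block coordinates are ascending (case~1) I would exchange the ascending structure of the central part and compensate with a difference in the left part, the descending central part, or the right part, the other three cases being analogous after reflecting $0\leftrightarrow1$ and left$\leftrightarrow$right. The patterns $U,L,R$ guarantee that the boundary edges are unchanged, so $\mathbf v(x')+\mathbf v(y')=\mathbf v(x)+\mathbf v(y)$, and that $x'$ and $y'$ are again pyramidal tours with step-backs having the single proper peak $n$; this I would verify block by block. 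For the necessity I would argue the contrapositive: if no admissible pair of blocks exists, or the difference pattern violates the relevant case, then within $R$ there is no exchange pair and, more strongly, $\tfrac12\mathbf 1_{x\triangle y}$ is not a convex combination of the other projected tours, because the flippable pieces are then forced and $x,y$ remain the two endpoints of the resulting product structure; equivalently a refinement of the cost vector $c$ isolates $x$ and $y$ as the unique minimizers, exhibiting the edge directly.

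I expect the local bookkeeping at the splice points to be the main obstacle, and it is caused by the step-backs. Because a step-back couples a coordinate with its neighbour, the blocks are genuinely two-coordinate objects and a single reroute can touch the edges among three or four consecutive cities, so verifying for each of the twelve patterns and each of the four cases that the splice both preserves the directed-edge multiset and produces tours with exactly one proper peak -- creating neither a spurious peak nor an illegal step-back at the boundary -- is delicate. I expect the necessity direction to be the harder one, since it requires proving that the enumerated blocks and cases exhaust every way of reassembling $x\triangle y$; this rests on a complete structural description of the symmetric difference of two pyramidal tours with step-backs, for which the pyramidal encoding is the essential tool.
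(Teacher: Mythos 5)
First, a point of reference: the paper does not prove Theorem~\ref{Theorem:adjacency} at all --- it is imported from \cite{Nikolaev2019}, and the only in-paper content is the one-sentence sufficiency idea that if the edges of $x$ and $y$ can be reassembled into two complementary pyramidal tours with step-backs $z$ and $t$, then $\mathbf{v}(z)+\mathbf{v}(t)=\mathbf{v}(x)+\mathbf{v}(y)$, so the segments $[\mathbf{v}(x),\mathbf{v}(y)]$ and $[\mathbf{v}(z),\mathbf{v}(t)]$ intersect and the vertices are not adjacent. Your framework is faithful to that method: the reduction via the cost vector $c$ is correct (the minimizers over $\operatorname{PSB}(n)$ are exactly the tours $z$ with $x\cap y\subseteq z\subseteq x\cup y$, since $c\cdot\mathbf{v}(z)=n-|z\cap x\cap y|$ for tours inside the union, and edges of the face $F=\operatorname{conv}(R)$ are edges of the polytope), and your plan for sufficiency --- splice fragments of $x$ and $y$ at the $U,L,R$ patterns according to the four cases --- is the construction the cited proof carries out.

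The genuine gap is at the pivot of your argument: you assert that non-adjacency in $F$ \emph{is equivalent to} the existence of an exchange pair $x',y'\in R$ with $\mathbf{v}(x')+\mathbf{v}(y')=\mathbf{v}(x)+\mathbf{v}(y)$, and this is precisely the hard content of the theorem, not a formality. For a $0/1$ polytope, non-adjacency only yields the midpoint as a convex combination $\sum_i\lambda_i\mathbf{v}(z_i)$ of possibly many tours of $R$ with unequal weights; every $z\in R$ has the same number $|x\setminus y|$ of symmetric-difference edges, but nothing general forces two of the $z_i$ to partition $x\triangle y$. Indeed, for $\operatorname{ATSP}(n)$ the analogous exchange condition is sufficient but not necessary --- otherwise non-adjacency would not be NP-complete \cite{Papadimitriou1978} --- so the equivalence is a special structural property of pyramidal tours with step-backs that must be \emph{proved}, by analyzing where a tour $z\in R$ can switch between edges of $x$ and edges of $y$ and showing these switch points are exhausted by the twelve two-coordinate block patterns. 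Your proposal defers exactly this (``verify block by block'', ``I expect the necessity direction to be the harder one''), and likewise the closing claim that when the conditions fail ``a refinement of the cost vector $c$ isolates $x$ and $y$ as the unique minimizers'' is stated without any indication of how such a hyperplane is built, or why a forced structure on flippable pieces excludes multi-tour convex combinations rather than just exchange pairs. In short: the skeleton matches the source, but both the necessity direction and the extraction of the block/case enumeration --- the actual substance of Theorem~\ref{Theorem:adjacency} --- remain unproven in your proposal.
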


The idea of sufficient conditions is that if from the edges of the tours $x$ and $y$ we can assemble two complementary pyramidal tours with step-backs $z$ and $t$, then the segment $[\mathbf{v}(x),\mathbf {v}(y)]$ intersects with the segment $[\mathbf{v}(z),\mathbf{v}(t)]$, and the corresponding vertices of the polytope $\PSB(n)$ are not adjacent (see \cite{Nikolaev2019}).

The examples of the first and third sufficient conditions for non-adjacency are shown in Fig.~\ref{Fig_sufficient} (edges of $x$ are solid, edges of $y$ are dashed, left and right blocks in pyramidal encodings are highlighted with dashed boxes).

\begin{figure} [t]
	\centering
	\begin{tikzpicture}[scale=0.8]
	
	\node at (-0.7,0) {$\mathbf{x}$};
	\node at (0,0) {$ \langle $};
	\node at (1,0) {1};
	\node at (2,0) {1};
	\node at (3,0) {1};
	\node at (4,0) {1};
	\node at (5,0) {1};
	\node at (6,0) {$ \rangle $};
	
	\draw [<-,>=stealth] (2.85,0.25) -- (4.1,0.25);
	
	\node at (-0.7,-0.75) {$\mathbf{y}$};
	\node at (0,-0.75) {$ \langle $};
	\node at (1,-0.75) {0};
	\node at (2,-0.75) {1};
	\node at (3,-0.75) {1};
	\node at (4,-0.75) {1};
	\node at (5,-0.75) {1};
	\node at (6,-0.75) {$ \rangle $};
	
	\draw [dashed] (1.75,0.25) -- (2.25,0.25) -- (2.25,-1) -- (1.75,-1) -- cycle;
	\draw [dashed] (4.75,0.25) -- (5.25,0.25) -- (5.25, -1) -- (4.75,-1) -- cycle;

	\begin{scope}[yshift=-2cm]
	\begin{scope}[every node/.style={circle,thick,draw,inner sep=2.4pt}]
	\node (1) at (0,0) {1};
	\node (2) at (1,0) {2};
	\node (3) at (2,0) {3};
	\node (4) at (3,0) {4};
	\node (5) at (4,0) {5};
	\node (6) at (5,0) {6};
	\node (7) at (6,0) {7};
	\end{scope}
	\draw [thick,->,>=stealth] (1) edge (2);
	\draw [thick,->,>=stealth] (2) edge (3);
	\draw [thick,->,>=stealth] (3) edge [bend left=50] (5);	
	\draw [thick,->,>=stealth] (5) edge (4);
	\draw [thick,->,>=stealth] (4) edge [bend left=50] (6);	
	\draw [thick,->,>=stealth] (6) edge (7);
	\draw [thick,->,>=stealth] (7) edge [bend left=30] (1);
	\draw (-0.7, 0) node{\textit{x}};
	\end{scope}
	
	\begin{scope}[yshift=-4cm]
	\begin{scope}[every node/.style={circle,thick,draw,inner sep=2.4pt}]
	\node (1) at (0,0) {1};
	\node (2) at (1,0) {2};
	\node (3) at (2,0) {3};
	\node (4) at (3,0) {4};
	\node (5) at (4,0) {5};
	\node (6) at (5,0) {6};
	\node (7) at (6,0) {7};
	\end{scope}
	\draw [thick,->,>=stealth,dashed] (1) edge [bend left=50] (3);
	\draw [thick,->,>=stealth,dashed] (3) edge (4);
	\draw [thick,->,>=stealth,dashed] (4) edge (5);	
	\draw [thick,->,>=stealth,dashed] (5) edge (6);	
	\draw [thick,->,>=stealth,dashed] (6) edge (7);
	\draw [thick,->,>=stealth,dashed] (7) edge [bend left=35] (2);		
	\draw [thick,->,>=stealth,dashed] (2) edge (1);
	\draw (-0.7, 0) node{\textit{y}};		
	\end{scope}
	
	\begin{scope}[yshift=-6cm]
	\begin{scope}[every node/.style={circle,thick,draw,inner sep=2.4pt}]
	\node (1) at (0,0) {1};
	\node (2) at (1,0) {2};
	\node (3) at (2,0) {3};
	\node (4) at (3,0) {4};
	\node (5) at (4,0) {5};
	\node (6) at (5,0) {6};
	\node (7) at (6,0) {7};
	\end{scope}
	\draw [thick,->,>=stealth] (1) edge (2);
	\draw [thick,->,>=stealth] (2) edge (3);
	\draw [thick,->,>=stealth,dashed] (3) edge (4);	
	\draw [thick,->,>=stealth,dashed] (4) edge (5);
	\draw [thick,->,>=stealth] (5) edge (6);	
	\draw [thick,->,>=stealth] (6) edge (7);
	\draw [thick,->,>=stealth] (7) edge [bend left=30] (1);	
	\draw (-0.7, 0) node{\textit{z}};		
	\end{scope}
	
	\begin{scope}[yshift=-8cm]
	\begin{scope}[every node/.style={circle,thick,draw,inner sep=2.4pt}]
	\node (1) at (0,0) {1};
	\node (2) at (1,0) {2};
	\node (3) at (2,0) {3};
	\node (4) at (3,0) {4};
	\node (5) at (4,0) {5};
	\node (6) at (5,0) {6};
	\node (7) at (6,0) {7};
	\end{scope}
	\draw [thick,->,>=stealth,dashed] (1) edge [bend left=50] (3);
	\draw [thick,->,>=stealth] (3) edge [bend left=50] (5);
	\draw [thick,->,>=stealth] (5) edge (4);	
	\draw [thick,->,>=stealth] (4) edge [bend left=50] (6);
	\draw [thick,->,>=stealth,dashed] (6) edge (7);
	\draw [thick,->,>=stealth,dashed] (7) edge [bend left=35] (2);		
	\draw [thick,->,>=stealth,dashed] (2) edge (1);
	\draw (-0.7, 0) node{\textit{t}};		
	\end{scope}
	
	\begin{scope}[xshift=8cm]
	\node at (-0.7,0) {$\mathbf{x}$};
	\node at (0,0) {$ \langle $};
	\node at (1,0) {1};
	\node at (2,0) {0};
	\node at (3,0) {0};
	\node at (4,0) {1};
	\node at (5,0) {0};
	\node at (6,0) {$ \rangle $};	
	
	\node at (-0.7,-0.75) {$\mathbf{y}$};
	\node at (0,-0.75) {$ \langle $};
	\node at (1,-0.75) {1};
	\node at (2,-0.75) {0};
	\node at (3,-0.75) {0};
	\node at (4,-0.75) {0};
	\node at (5,-0.75) {0};
	\node at (6,-0.75) {$ \rangle $};
	
	\draw [->,>=stealth] (1.9,0.25) -- (3.1,0.25);
	\draw [->,>=stealth] (3.9,-0.5) -- (5.1,-0.5);
	
	\draw [dashed] (0.75,0.25) -- (1.25,0.25) -- (1.25,-1) -- (0.75,-1) -- cycle;
	\draw [dashed] (3.75,0.25) -- (5.25,0.25) -- (5.25, -1) -- (3.75,-1) -- cycle;	
	
	\begin{scope}[yshift=-2cm]
	\begin{scope}[every node/.style={circle,thick,draw,inner sep=2.4pt}]
	\node (1) at (0,0) {1};
	\node (2) at (1,0) {2};
	\node (3) at (2,0) {3};
	\node (4) at (3,0) {4};
	\node (5) at (4,0) {5};
	\node (6) at (5,0) {6};
	\node (7) at (6,0) {7};
	\end{scope}
	\draw [thick,->,>=stealth] (1) edge (2);
	\draw [thick,->,>=stealth] (2) edge [bend left=45] (5);
	\draw [thick,->,>=stealth] (5) edge [bend left=50] (7);
	\draw [thick,->,>=stealth] (7) edge (6);
	\draw [thick,->,>=stealth] (6) edge [bend left=45] (3);
	\draw [thick,->,>=stealth] (3) edge (4);
	\draw [thick,->,>=stealth] (4) edge [bend left=45] (1);
	\draw (-0.7, 0) node{\textit{x}};			
	\end{scope}
	
	\begin{scope}[yshift=-4cm]
	\begin{scope}[every node/.style={circle,thick,draw,inner sep=2.4pt}]
	\node (1) at (0,0) {1};
	\node (2) at (1,0) {2};
	\node (3) at (2,0) {3};
	\node (4) at (3,0) {4};
	\node (5) at (4,0) {5};
	\node (6) at (5,0) {6};
	\node (7) at (6,0) {7};
	\end{scope}
	\draw [thick,->,>=stealth,dashed] (1) edge (2);
	\draw [thick,->,>=stealth,dashed] (2) edge [bend left=35] (7);
	\draw [thick,->,>=stealth,dashed] (7) edge [bend left=50] (5);	
	\draw [thick,->,>=stealth,dashed] (5) edge (6);	
	\draw [thick,->,>=stealth,dashed] (6) edge [bend left=50] (4);
	\draw [thick,->,>=stealth,dashed] (4) edge (3);	
	\draw [thick,->,>=stealth,dashed] (3) edge [bend left=50] (1);
	\draw (-0.7, 0) node{\textit{y}};		
	\end{scope}
	
	\begin{scope}[yshift=-6cm]
	\begin{scope}[every node/.style={circle,thick,draw,inner sep=2.4pt}]
	\node (1) at (0,0) {1};
	\node (2) at (1,0) {2};
	\node (3) at (2,0) {3};
	\node (4) at (3,0) {4};
	\node (5) at (4,0) {5};
	\node (6) at (5,0) {6};
	\node (7) at (6,0) {7};
	\end{scope}
	\draw [thick,->,>=stealth] (1) edge (2);
	\draw [thick,->,>=stealth,dashed] (2) edge [bend left=35] (7);
	\draw [thick,->,>=stealth,dashed] (7) edge [bend left=50] (5);	
	\draw [thick,->,>=stealth,dashed] (5) edge (6);	
	\draw [thick,->,>=stealth] (6) edge [bend left=45] (3);
	\draw [thick,->,>=stealth] (3) edge (4);
	\draw [thick,->,>=stealth] (4) edge [bend left=45] (1);
	\draw (-0.7, 0) node{\textit{z}};		
	\end{scope}
	
	\begin{scope}[yshift=-8cm]
	\begin{scope}[every node/.style={circle,thick,draw,inner sep=2.4pt}]
	\node (1) at (0,0) {1};
	\node (2) at (1,0) {2};
	\node (3) at (2,0) {3};
	\node (4) at (3,0) {4};
	\node (5) at (4,0) {5};
	\node (6) at (5,0) {6};
	\node (7) at (6,0) {7};
	\end{scope}
	\draw [thick,->,>=stealth,dashed] (1) edge (2);
	\draw [thick,->,>=stealth] (2) edge [bend left=45] (5);
	\draw [thick,->,>=stealth] (5) edge [bend left=50] (7);
	\draw [thick,->,>=stealth] (7) edge (6);
	\draw [thick,->,>=stealth,dashed] (6) edge [bend left=50] (4);
	\draw [thick,->,>=stealth,dashed] (4) edge (3);	
	\draw [thick,->,>=stealth,dashed] (3) edge [bend left=50] (1);
	\draw (-0.7, 0) node{\textit{t}};		
	\end{scope}
	\end{scope}
	\end{tikzpicture}	
	\caption{Examples of first and third sufficient conditions}
	\label{Fig_sufficient}
\end{figure}
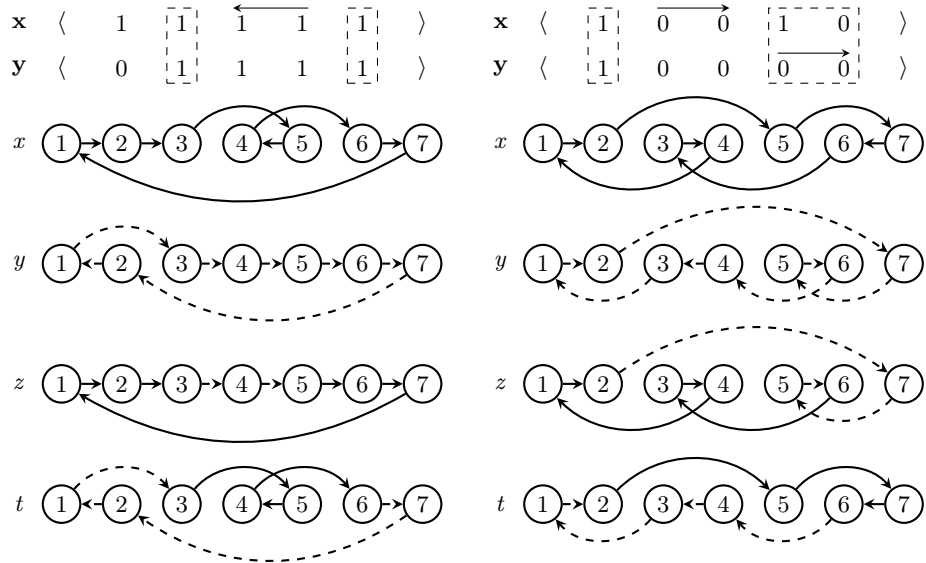

In \cite{Nikolaev2019} it was proved that the necessary and sufficient condition for non-adjacency of Theorem~\ref{Theorem:adjacency} can be verified by exhaustive search in $O(n^3)$ time. We improve this estimate by introducing a linear-time algorithm.

\begin{theorem}
	The question of whether two vertices of the polytope $\operatorname{PSB}(n)$ are adjacent can be verified in linear time $O(n)$.	
\end{theorem}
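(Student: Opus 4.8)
\section{Proof proposal}

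The plan is to turn the characterization of non-adjacency in Theorem~\ref{Theorem:adjacency} into a search that avoids the $O(n^3)$ exhaustive enumeration of block pairs. Non-adjacency of $\mathbf{v}(x)$ and $\mathbf{v}(y)$ amounts to the existence of a left block at some city $i$ and a right block at some city $j$ such that the central fragment $\mathbf{x}^{0,1}_{[i_a,j_b]}$ coincides with $\mathbf{y}^{0,1}_{[i_a,j_b]}$ and the difference conditions of the appropriate case hold. First I would observe that being a left block (type $U$, $L$, or $i=1$) and being a right block (type $U$, $R$, or $j=n$) are \emph{local} properties, each depending only on one or two consecutive coordinates of the two encodings. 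Hence a single left-to-right scan produces, in $O(n)$ time, the list of all $O(n)$ candidate left blocks together with their values $\mathbf{x}^{0,1}_i$, and likewise all candidate right blocks with their values $\mathbf{x}^{0,1}_j$.

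Next I would make every predicate in the theorem answerable in $O(1)$. For this I precompute prefix-sum arrays of the coordinatewise disagreements of the two tours in each relevant encoding: $\mathbf{x}^{0,1}$ versus $\mathbf{y}^{0,1}$ (for the central coincidence), $\mathbf{x}^{1,sb}$ versus $\mathbf{y}^{1,sb}$ and $\mathbf{x}^{0,sb}$ versus $\mathbf{y}^{0,sb}$ (for differences in the central part in ascending and descending order), and $\mathbf{x}^{0,1,sb}$ versus $\mathbf{y}^{0,1,sb}$ (for differences in the left and right parts). With these arrays, for any interval the questions ``do the tours differ here'' and ``do $\mathbf{x}^{0,1}$ and $\mathbf{y}^{0,1}$ coincide here'' are answered in constant time, and the central-coincidence requirement becomes simply that $[i_a,j_b]$ contains no coordinate at which $\mathbf{x}^{0,1}$ and $\mathbf{y}^{0,1}$ disagree.

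The crucial step is to replace the double loop over block pairs by a linear sweep, exploiting two forms of monotonicity. First, the central-coincidence requirement confines $[i_a,j_b]$ to a single maximal interval free of $\mathbf{x}^{0,1}$-disagreements; thus, once the left block $i$ (hence $i_a$) is fixed, the admissible right blocks are exactly those whose $j_b$ precedes the first coordinate at or after $i_a$ where $\mathbf{x}^{0,1}$ and $\mathbf{y}^{0,1}$ disagree. Second, along such an interval the predicates ``differ in the central part in ascending/descending order'' are monotone nondecreasing in $j_b$, ``differ in the right part'' is monotone nonincreasing in $j_b$, and ``differ in the left part'' depends only on $i_a$. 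Because these thresholds move monotonically as $i_a$ increases, a two-pointer sweep over the sorted candidate left and right blocks visits each candidate a constant number of times. For each of the four cases of Theorem~\ref{Theorem:adjacency} the required combination of predicates is then tested by combining a quantity depending only on the left block with one depending only on the right block; in the separated cases $3$ and $4$ this decoupling is immediate, and in cases $1$ and $2$ the mandatory central predicate together with the disjunction is handled by recording, per left block, the smallest $j_b$ at which the disjunction becomes satisfiable.

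The main obstacle I expect is precisely this decoupling of the case conditions across the two-block boundary while keeping the sweep linear. The conditions mix central-ascending and central-descending differences (which depend on both blocks through the interval $[i_a,j_b]$) with left- and right-part differences (which depend on a single block), so I must verify, case by case, that the search over admissible $(i,j)$ reduces to combining one left-indexed and one right-indexed boolean, with all thresholds varying monotonically. Additional care is needed for the edge cases built into the statement: the double blocks, which span two coordinates and shift $i_a$ and $j_b$ by two rather than one, the boundary blocks $i=1$ and $j=n$, and the convention that cities $1$ and $n$ may be read as ascending or descending when convenient. Each introduces only a constant number of special subcases that must be folded into the sweep without disturbing the $O(n)$ bound; once the monotonicity of every predicate is established, correctness follows directly from Theorem~\ref{Theorem:adjacency} and the running time is linear.
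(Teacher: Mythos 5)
Your proposal is correct in substance, but it takes a genuinely different route from the paper. The paper's proof is a short greedy argument: for each of the four cases of Theorem~\ref{Theorem:adjacency} it runs one left-to-right pass (Algorithm~\ref{Algorithm:adjacency}) that implicitly constructs the complementary tour $z$, treating city $1$ as the current left block, accumulating difference flags, promoting to the right part once a $U$/$R$ block is found after a required central difference, and \emph{restarting} the left block whenever the central-coincidence condition is violated (which is sound because any valid central interval must avoid the violation coordinate, so no witness to the left of it is lost). You instead make the witness search explicit: enumerate all $O(n)$ candidate blocks (correctly observing their locality), precompute prefix/suffix disagreement arrays for each of the encodings $\mathbf{x}^{0,1}$, $\mathbf{x}^{1,sb}$, $\mathbf{x}^{0,sb}$, $\mathbf{x}^{0,1,sb}$ versus those of $y$, and reduce each case to interval-existence queries whose thresholds move monotonically in $i_a$. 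What each buys: the paper's scan is lighter (no auxiliary arrays, one pass per case) and mirrors the geometric construction of the complementary tours $z,t$, but leaves the correctness of the greedy restart implicit; your version uses more machinery but makes correctness essentially self-evident, since non-adjacency is reduced to a transparent feasibility region for $(i_a,j_b)$.

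One bookkeeping slip to fix in cases 1 and 2: the disjunction there is not upward-closed in $j_b$, so it is not captured by ``the smallest $j_b$ at which the disjunction becomes satisfiable.'' The right-part difference holds exactly for $j_b \le E-1$ (where $E$ is the last coordinate with $\mathbf{x}^{0,1,sb}_E \neq \mathbf{y}^{0,1,sb}_E$), hence is \emph{downward}-closed, while the central-descending difference holds for $j_b \ge B(i_a)$, hence is upward-closed; together with the mandatory central predicate $j_b \ge A(i_a)$ and the coincidence window $j_b \le D(i_a)-1$, the admissible set of $j_b$ is a union of at most \emph{two} intervals with a possible forbidden gap $(E-1, B(i_a))$. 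This costs only a second interval query (or second pointer) per left block and does not disturb the $O(n)$ bound, but as stated your single-threshold compression would miss witnesses.
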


\begin{proof}
	We consider two pyramidal tours with step-backs $x$ and $y$, and the corresponding vertices $\mathbf{v}(x)$ and $\mathbf{v}(y)$ of the polytope $\operatorname{PSB}(n)$.	
	Each of the four sufficient non-adjacency conditions of Theorem~\ref{Theorem:adjacency} can be verified in a single pass through the pyramidal encodings $\mathbf{x}^{0,1,sb}$ and $\mathbf{y}^{0,1,sb}$, when we sequentially find the left block, the right block, and check additional conditions.	
	The pseudo-code to construct a tour $z$ and verify the first and second sufficient conditions is given in the Algorithm~\ref{Algorithm:adjacency}.
	The other two sufficient conditions are verified similarly.
\end{proof}

\begin{algorithm}[t]
	\caption{Verifying 1st and 2nd sufficient conditions for non-adjacency}
	\label{Algorithm:adjacency}
\begin{algorithmic}[0]
	\Procedure{NonAdjacencyTest}{$\mathbf{x},\mathbf{y},n$}
		\State $LBlock \gets \mathbf{TRUE}$ 	\Comment{Consider the city 1 as a left block}
		\State $RBlock, zNotx, zNoty \gets \mathbf{FALSE}$
		\For {$i \gets 2$ \textbf{to} $n-1$}
			\If{$LBlock = \mathbf{TRUE}$ \textbf{and} $RBlock = \mathbf{FALSE}$}		\Comment{Central part}
				\If{$\mathbf{z}_i$ ($\mathbf{y}_i$) is different from $\mathbf{x}_i$}
					\State $zNotx \gets \mathbf{TRUE}$		\Comment{$z$ visits $i$ by the edges of $y$}
				\EndIf
				\If{$zNotx = \mathbf{TRUE}$ and we found $U$ or $R$ block}
					\State $RBlock \gets \mathbf{TRUE}$		\Comment{Starting the right part}
				\EndIf
				\If{the conditions of the central part are violated}
					\State $LBlock,zNotx \gets \mathbf{FALSE}$		\Comment{Return to the left part}
				\EndIf
			\EndIf	
			\If{$LBlock = \mathbf{FALSE}$}		\Comment{Left part}
				\If{$\mathbf{z}_i$ ($\mathbf{x}_i$) is different from $\mathbf{y}_i$} 	\Comment{$z$ visits $i$ by the edges of $x$}
					\State $zNoty \gets \mathbf{TRUE}$
				\EndIf
				\If{we found $U$ or $L$ block}
					\State $LBlock \gets \mathbf{TRUE}$		\Comment{Starting the central part}
				\EndIf
			\EndIf
			\If{$RBlock = \mathbf{TRUE}$}		\Comment{Right part}
				\If{$\mathbf{z}_i$ ($\mathbf{x}_i$) is different from $\mathbf{y}_i$}		\Comment{$z$ visits $i$ by the edges of $x$}
					\State $zNoty \gets \mathbf{TRUE}$
				\EndIf
			\EndIf
		\EndFor
		\State $RBlock \gets \mathbf{TRUE}$ 	\Comment{Consider the city $n$ as a right block}
		\If{$LBlock,RBlock,zNotx,zNoty = \mathbf{TRUE}$}			
			\State 	\Return 1st/2nd sufficient condition for non-adjacency is satisfied
		\Else
			\State \Return 1st/2nd sufficient condition for non-adjacency is not satisfied
		\EndIf
	\EndProcedure		
\end{algorithmic}
\end{algorithm}

\section{Graph diameter and clique number}

Based on Theorem~\ref{Theorem:adjacency}, we estimate the diameter of 1-skeleton of $\PSB (n)$.

\begin{theorem}
	The diameter of 1-skeleton of $\PSB(n)$ is bounded above by 4.
\end{theorem}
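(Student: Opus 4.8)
The plan is to fix a single reference vertex and to show that every vertex of $\PSB(n)$ lies within graph distance $2$ of it; the bound of $4$ then follows immediately from the triangle inequality, since for any two tours $x,y$ we have $d(\mathbf{v}(x),\mathbf{v}(y)) \le d(\mathbf{v}(x),\mathbf{v}(r)) + d(\mathbf{v}(r),\mathbf{v}(y)) \le 2+2$. The natural choice for the reference tour $r$ is the monotone ascending pyramidal tour $1 \to 2 \to \cdots \to n \to 1$, whose pyramidal encoding is $\langle 1,1,\ldots,1\rangle$ and which contains no step-backs. The whole problem thus reduces to the key lemma: $d(\mathbf{v}(x),\mathbf{v}(r)) \le 2$ for every pyramidal tour with step-backs $x$.

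To establish the lemma I would first read off from Theorem~\ref{Theorem:adjacency} exactly when $x$ fails to be adjacent to $r$. Because $r$ equals $1$ on every coordinate and carries no step-backs, the only admissible blocks against $r$ are the single block $U_{11}$ (at coordinates where $x$ also equals $1$) together with the two endpoints $i=1$ and $j=n$; every $L$-block, every $R$-block, and every double block is ruled out, since each of them would force a $0$ or a step-back into $r$. This collapses the statement of Theorem~\ref{Theorem:adjacency} to Case~1 and shows that $x$ is non-adjacent to $r$ precisely when there is a window, bounded by such blocks, on which $x$ agrees with $r$ in $\mathbf{x}^{0,1}$ yet differs in the ascending encoding $\mathbf{x}^{1,sb}$ (that is, carries a step-back), while also differing from $r$ in the left part, in the right part, or in the central descending order. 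For each such $x$ I would then construct an explicit common neighbour $w$: a tour that copies the edges of $r$ on the part responsible for the descending/left/right discrepancy and the edges of $x$ on the part responsible for the step-back (or vice versa), chosen so that the assembled $w$ is again a valid pyramidal tour with step-backs. Verifying through Theorem~\ref{Theorem:adjacency} that the non-adjacency conditions fail both for the pair $(x,w)$ and for the pair $(w,r)$ then yields the path $\mathbf{v}(x) - \mathbf{v}(w) - \mathbf{v}(r)$ of length $2$.

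The main obstacle is the construction and validation of the intermediate tour $w$ in the presence of step-backs. Two points require care. First, the double blocks $U_{1111}, L_{1110}, L_{1011}, R_{1101}, R_{0111}$ (and their $0$-analogues) and the tilde coordinates make the block bookkeeping delicate, so the window boundaries $i_a$ and $j_b$ must be tracked exactly as in Algorithm~\ref{Algorithm:adjacency} to be sure that no alternative block choice secretly certifies non-adjacency of $(x,w)$ or of $(w,r)$. Second, feasibility must be checked: the edges borrowed from $x$ and from $r$ have to splice into a single Hamiltonian tour with exactly one proper peak $n$, which constrains where the boundaries of the copied segments may be placed. I expect the bulk of the work to be this case analysis — splitting on whether the obstruction to adjacency with $r$ lives in the left part, the right part, or the central descending order, and on whether the relevant blocks are single or double — whereas the reduction to Case~1 and the closing triangle-inequality step are routine once the key lemma is secured.
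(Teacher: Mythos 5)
Your high-level plan coincides with the paper's: both proofs fix the hub $r=\langle 1,1,\ldots,1\rangle$, prove that every vertex is at distance at most $2$ from $\mathbf{v}(r)$, and concatenate two such paths; your reduction of Theorem~\ref{Theorem:adjacency} against $r$ to Case~1 with only $U_{11}$ blocks and endpoints is also correct. The genuine gap is the step you explicitly defer: the construction of the common neighbour $w$, and the natural readings of your recipe fail. Take $x$ with \emph{two} ascending step-backs, e.g.\ $\mathbf{x}=\langle \lvv{1 \ 1}, 1, \lvv{1 \ 1}, 1\rangle$ (which is non-adjacent to $r$: a window with one step-back in the central part has its Case-1 first bullet satisfied by that step-back and its second bullet satisfied by the other step-back sitting in the left part). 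If $w$ copies the edges of $x$ on the step-back parts and the edges of $r$ elsewhere, then $w$ still carries both step-backs and is non-adjacent to $r$ by exactly the same certificate, so $w$ is not a common neighbour. The ``vice versa'' splice (flatten the step-backs to plain ascending, keep $x$ elsewhere) fails symmetrically against $x$: after flattening, $\mathbf{w}^{0,1}$ and $\mathbf{x}^{0,1}$ agree across each step-back, so a window containing one step-back centrally is admissible, the central ascending difference gives the first bullet, and the other step-back gives the left/right difference. So no splice of $x$-edges and $r$-edges serves as the intermediate tour once $x$ has more than one step-back, and your argument stalls precisely at the case analysis you flagged as ``the bulk of the work.''

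The paper's intermediate is not a splice but a direction reversal: define the pyramidal tour $\hat x$ by $\hat{\mathbf{x}}^{0,1}_i=0$ if $i$ is part of a step-back of $x$ and $\hat{\mathbf{x}}^{0,1}_i=1$ otherwise. This kills both failure modes simultaneously, independent of the number of step-backs. At every ascending step-back the tours $x$ and $\hat x$ already differ in $\mathbf{x}^{0,1}$ ($1,1$ versus $0,0$), so no admissible window can contain such a step-back in its central part; at descending step-backs the $0,1$-projections agree but the discrepancy is a descending one, which only feeds the second bullet of Case~1, never the first. Hence $x$ and $\hat x$ cannot differ in the central part in ascending order and are adjacent. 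And since $\hat x$ is step-back-free, it is adjacent to $\langle 1,1,\ldots,1\rangle$ (the only possible blocks against $r$ are again $U_{11}$, and without step-backs a central ascending difference is impossible), giving the path $\mathbf{v}(x)-\mathbf{v}(\hat x)-\mathbf{v}(r)-\mathbf{v}(\hat y)-\mathbf{v}(y)$ of length $4$. Your proposal is missing exactly this idea — replacing the step-back segments by travel in the \emph{opposite} direction rather than preserving or flattening them — and without it the distance-$2$ lemma does not go through.
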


\begin{proof}
The idea is as follows. 
For an arbitrary pyramidal tour with step-backs $x$ we construct a pyramidal tour $\hat{x}$ where 
\begin{equation}\label{Equation:pyramidal_adjacent}
\hat{\mathbf{x}}^{0,1}_i = 
\begin{cases}
0,& \text{ if } i \text{ is a part of step-back in } x\\
1,& \text{ otherwise.}
\end{cases}
\end{equation}
For instance:
\begin{align*}
&\mathbf{x}: \langle \ \ 1 \ \ \lvv{1 \ \ 1} \ \ 1 \ \ 0 \ \ 0 \ \ 1 \ \ \vv{0 \ \ 0} \ \ 1 \rangle,\\
&\hat{\mathbf{x}}: \langle \ \ 1 \ \ 0 \ \ 0 \ \ 1 \ \ 1 \ \ 1 \ \ 1 \ \ 0 \ \ 0 \ \ 1 \rangle.
\end{align*}

By construction, the encodings of the tours $x$ and $\hat{x}$ can only contain blocks $U_{11}$, which restricts us to the first sufficient condition of Theorem~\ref{Theorem:adjacency}. 
However, by (\ref{Equation:pyramidal_adjacent}) the tours $x$ and $\hat{x}$ cannot differ in the central part in the ascending order. Hence, by Theorem~\ref{Theorem:adjacency}, the vertices $\mathbf{v}(x)$ and $\mathbf{v}(\hat{x})$ are adjacent. And for any pyramidal tour $\hat{x}$, the vertex $\mathbf{v}(\hat{x})$ is adjacent to the vertices corresponding to the tours $\langle 1,1,\ldots,1 \rangle$ and $\langle 0,0,\ldots,0 \rangle$ (see \cite{Bondarenko2018}).
Thus, between any pair of vertices of the polytope $\PSB(n)$ we can construct a path of no more than 4 edges. The corresponding scheme is shown in Fig.~\ref{Fig:diameter}.
\end{proof}

\begin{figure} [t]
	\centering
\begin{tikzpicture}[scale=0.95]

\node [circle,thick,draw,inner sep=3pt, label=below:\footnotesize $\begin{matrix} \text{pyramidal tour} \\ \text{with step-backs } x \end{matrix}$] (PSB1) at (0,0) {};
\node [circle,thick,draw,inner sep=3pt,label=above:\footnotesize $\text{pyramidal tour } \hat{x}$] (P1) at (2.5,0) {};
\node [circle,thick,draw,inner sep=3pt,label=below:\footnotesize $\begin{matrix} \text{pyramidal tour} \\ \left\langle 1\,1\,1\, \ldots \, 1 \right\rangle  \end{matrix}$] (1-0) at (5,0) {};
\node [circle,thick,draw,inner sep=3pt,label=above:\footnotesize $\text{pyramidal tour } \hat{y}$]  (P2) at (7.5,0) {};
\node [circle,thick,draw,inner sep=3pt,label=below:\footnotesize $\begin{matrix} \text{pyramidal tour} \\ \text{with step-backs } y \end{matrix}$] (PSB2) at (10,0) {};

\draw [thick] (PSB1) -- (P1) -- (1-0) -- (P2) -- (PSB2);
\end{tikzpicture}
	\caption{Path of length 4 between an arbitrary pair of $\PSB(n)$ vertices $\mathbf{v}(x)$ and $\mathbf{v}(y)$}
\label{Fig:diameter}
\end{figure}
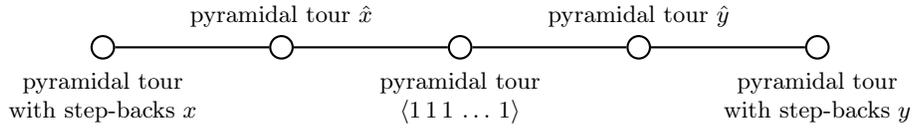

Now we apply the necessary and sufficient condition of Theorem~\ref{Theorem:adjacency} to estimate the clique number of the 1-skeleton of the polytope $\PSB(n)$.

\begin{theorem}
	The clique number of 1-skeleton of the polytope $\PSB (n)$ is quadratic in the parameter $n$:
	\begin{equation} \label{Equation:clique_number}
		\omega(\PSB (n)) = \Theta \left( n^2\right).
	\end{equation}

\end{theorem}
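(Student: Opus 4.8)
The plan is to prove the two bounds $\omega(\PSB(n)) = \Omega(n^2)$ and $\omega(\PSB(n)) = O(n^2)$ separately, reading adjacency in both directions directly off the necessary and sufficient condition of Theorem~\ref{Theorem:adjacency}.

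For the lower bound I would exhibit an explicit family of $\Theta(n^2)$ pyramidal tours with step-backs, governed by two independent indices (for instance the position of an ascending feature and the position of a descending feature), and show that every pair is adjacent by running the pair through the four sufficient non-adjacency conditions and arguing that none can be met. The decisive point is that a step-back produces a discrepancy of a single type: an ascending step-back alters only $\mathbf{x}^{1,sb}$ while leaving $\mathbf{x}^{0,1}$ and $\mathbf{x}^{0,sb}$ intact, and dually for a descending step-back, whereas a plain $0/1$ change is simultaneously an ascending and a descending discrepancy. One must therefore track, position by position, the type of each discrepancy and arrange the family so that for every admissible pair of a left block and a right block at least one conjunct of the relevant case (1)--(4) fails; typically the discrepancies cannot be placed at once in the central part (with the required order) and in the left or right part, for want of an admissible block of the correct values $\mathbf{x}^{0,1}_i,\mathbf{x}^{0,1}_j$ separating them. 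Note that one cannot simply reuse the known quadratic clique of $\operatorname{PYR}(n)$: since $\PSB(n)$-adjacency implies $\operatorname{PYR}(n)$-adjacency but not conversely, the pure-tour cliques of $\PSB(n)$ may be smaller, which is exactly why the construction should genuinely employ step-backs.

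For the upper bound I would bound every clique by a pigeonhole argument: assign to each tour a signature drawn from a set of size $O(n^2)$ and prove that two distinct tours sharing a signature are non-adjacent, so a clique meets each signature class at most once. The signature should record the coarse type profile of the tour (where it switches between ascending and descending behaviour, together with its step-back placements) compressed to $O(n^2)$ possibilities, chosen finely enough that any collision forces a common left block, a common right block, a coinciding central fragment $\mathbf{x}^{0,1}_{[i_a,j_b]}$, and the order-specific discrepancies of one of the four cases. As a partial check and possible shortcut, the pure pyramidal tours inside any $\PSB(n)$-clique already form a clique of $\operatorname{PYR}(n)$, because $\operatorname{PYR}(n)$ is the convex hull of a subset of the vertices of $\PSB(n)$ and hence an edge of $\PSB(n)$ joining two such vertices survives as an edge of $\operatorname{PYR}(n)$; since $\omega(\operatorname{PYR}(n)) = \Theta(n^2)$ is known, the residual task is to control the tours that actually use step-backs.

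The hard part will be the upper bound, since it must rule out every large pairwise-adjacent family at once rather than verify a single prescribed one. The concrete obstacle is to find a signature that is at the same time coarse (only $O(n^2)$ values) and strong enough that a collision always triggers one of the four non-adjacency cases: the naive signature, the full pattern $\mathbf{x}^{0,1}$, is exponential, so the argument must exploit the refined ascending/descending bookkeeping of Theorem~\ref{Theorem:adjacency} to collapse the number of relevant classes down to quadratic.
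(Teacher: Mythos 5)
Your overall architecture (an explicit $\Omega(n^2)$ family plus an $O(n^2)$ counting argument driven by Theorem~\ref{Theorem:adjacency}) matches the paper, but neither bound is actually carried out, and the lower-bound guidance contains a wrong turn. You argue that, since $\PSB(n)$-adjacency is stronger than $\operatorname{PYR}(n)$-adjacency, the construction ``should genuinely employ step-backs.'' The correct conclusion from your (valid) observation is only that adjacency of a pure family must be \emph{re-verified} against the $\PSB$ criterion --- and that is exactly what the paper does: its clique is entirely step-back-free, consisting of the tours $x(k,s)$ with $\mathbf{x}^{0,1,sb}_i = 1$ for $2 \le i \le k+1$ and $n-s \le i \le n-1$ and $0$ elsewhere, giving $\lfloor n/2 \rfloor^2$ tours. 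The absence of step-backs is precisely what makes verification easy: the necessary condition $\mathbf{x}^{0,1}_{[i_a,j_b]} = \mathbf{y}^{0,1}_{[i_a,j_b]}$ together with step-back-freeness forces the full encodings to coincide in the central part, so the tours can never differ there in either order, and every one of the four cases then requires differences in both the left and the right parts; but any shared $1$-coordinate $i \le n/2$ forces the entire left parts to coincide, and any shared $1$-coordinate $i \ge n/2 + 1$ the entire right parts, so no case can fire. Your proposal points away from this construction and never produces or verifies a concrete family, so the lower bound is not established.

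For the upper bound you correctly name the obstacle (a signature of only $O(n^2)$ values strong enough to trigger non-adjacency on collision) but do not resolve it, and your stipulation that a clique meet each class \emph{at most once} is stronger than needed and not what makes the argument close. The paper's mechanism is an iterated peeling-and-dichotomy argument: first discard the at most $18(n-3)$ tours that are unique on some pair of neighboring coordinates; then observe that two adjacent tours sharing a $0/1$-segment (one of $8$ types) must have encodings coinciding \emph{entirely on one side} of that segment, and a three-tour contradiction shows the side is the same for all clique members sharing the segment, so segments split into left and right ones, with every left segment preceding every right segment within a tour. Consequently each surviving tour is a monotone $0$- or $1$-sequence between its last left segment (position $i$) and first right segment; repeating the same coincide-on-one-side argument \emph{inside} the sequence (after peeling $O(n)$ further tours with a unique coordinate) yields classes $\bar{W}^{0}_{i,s}$ of size at most $4$, determined by $(i,s)$, the sequence type, and two coordinate values --- hence $|W| = O(n^2)$. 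So the signature you seek is essentially the pair $(i,s)$ with the direction bit, but the substance lies in the side-coincidence lemma, the same-side consistency across the clique, and the left-before-right ordering, none of which appear in your proposal; and bounded class size ($\le 4$) plus $O(n)$ exceptional tours per level suffices, whereas size-$1$ classes are likely unattainable. Your auxiliary remark that the pure tours of a $\PSB(n)$-clique form a $\operatorname{PYR}(n)$-clique is correct (an edge of $\PSB(n)$ between vertices of $\operatorname{PYR}(n)$ restricts to an edge of $\operatorname{PYR}(n)$), but as you concede it leaves the step-back tours --- the whole difficulty --- uncontrolled.
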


\begin{proof}
	\textit{Upper bound.}
	Let $Y^{\mathbf{v}}$ be a set of pairwise adjacent vertices of $\PSB(n)$, and $Y$ be the set of corresponding pyramidal tours with step-backs. Let us estimate the cardinality of $Y$.
	
	\textit{Step 1.}
	Consider the pyramidal encodings of the tours.
	We call a tour $x \in Y$ \textit{unique with respect to a pair of neighboring coordinates $k,k+1$} if
	\[\forall y \in Y \backslash \{x\}:\ \mathbf{x}^{0,1,sb}_{\left[k,k+1\right]}  \neq \mathbf{y}^{0,1,sb}_{\left[k,k+1\right]}.\]
	A pair of neighboring coordinates in the pyramidal encoding can take 18 different values. Hence the number of tours in $Y$ that are unique in a pair of neighboring coordinates does not exceed $18(n-3)$.
	We construct the set $W$ by excluding from $Y$ all tours that are unique in a pair of neighboring coordinates.

	\textit{Step 2.}
	Let a pyramidal tour with step-backs in a pair of neighboring cities $k$ and $k+1$ visit one of the cities in ascending order, and the other in descending order.
	Since the city may or may not be a part of step-back, we get 8 possible types of pyramidal encodings at coordinates $k$ and $k+1$:
	\begin{equation} \label{Equation:01-segments}
	\begin{split}
	\left\langle 1 \ \ 0 \right\rangle, \ \ \left\langle 1 \ \ \lvv{0 \ \ 0} \right\rangle, \ \  \left\langle \vv{1 \ \ 1} \ \ 0 \right\rangle, \ \ \left\langle \vv{1 \ \ 1} \ \ \lvv{0 \ \ 0} \right\rangle,\\
	\left\langle 0 \ \ 1 \right\rangle, \ \ \left\langle 0 \ \ \vv{1 \ \ 1} \right\rangle, \ \  \left\langle \lvv{0 \ \ 0} \ \ 1 \right\rangle, \ \ \left\langle \lvv{0 \ \ 0} \ \ \vv{1 \ \ 1} \right\rangle.
	\end{split}
	\end{equation}
	We call such sections of encoding \textit{$0 \slash 1$-segments}.
	
	We consider a pyramidal tour with step-backs $x \in W$ with $0 \slash 1$-segment at the coordinates $k,k+1$.
	By construction, we have excluded from $W$ all tours that are unique in a pair of neighboring coordinates, so there exists a tour $y \in W \backslash \{x\}$ with the same $0 \slash 1$-segment at coordinates $k,k+1$, i.e. $\mathbf{x}^{0,1,sb}_{\left[k,k+1\right]} = \mathbf{y}^{0,1,sb}_{\left[k,k+1\right]}$.
	
	Thus, pyramidal encodings of tours $x$ and $y$ on coordinates $k,k+1$ have the form of a pair of blocks $U$, where the coordinate $k$ is in the left block and $k+1$ is in the right block.
	Since the corresponding vertices $\mathbf{v}(x)$ and $\mathbf{v}(y)$ of the polytope $\PSB(n)$ are adjacent, by Theorem~\ref{Theorem:adjacency}, the encodings of the tours $x$ and $y$ coincide either in the left part ($\mathbf{x}^{0,1,sb}_{\left[1,k\right]} = \mathbf{y}^{0,1,sb}_{\left[1,k\right]}$), or in the right part ($\mathbf{x}^{0,1,sb}_{\left[k+1,n\right]} = \mathbf{y}^{0,1,sb}_{\left[k+1,n\right]}$).
	For instance:
	\begin{align*}
	\mathbf{x}: \langle \ \  \boxed{1 \ \ \vv{0 \ \ 0} \ \ 1} \ \ \overbracket{0 \ \ 1}^{k,k+1} \ \ \vv{0 \ \ 0} \ \ 0 \ \ 1 \ \ \rangle,\\
	\mathbf{y}: \langle \ \ \boxed{1 \ \ \vv{0 \ \ 0} \ \ 1} \ \ \underbracket{ 0 \ \ 1}_{k,k+1} \ \ 1 \ \ 0 \ \ \lvv{1 \ \ 1} \ \ \rangle.
	\end{align*}
	
	Note that for any subset of tours in $W$ with a common $0\slash 1$-segment, the coinciding parts of the encoding are on the same side of the segment. Indeed, suppose that three tours $x,y,z \in W$ have the same $0\slash 1$-segment on the cities $k,k+1$, but the coinciding parts of the encodings are on different sides of the segment.
	Without loss of generality, let
	\begin{gather*}
	\mathbf{x}^{0,1,sb}_{\left[1,k\right]} = \mathbf{y}^{0,1,sb}_{\left[1,k\right]} \text{ and } \mathbf{x}^{0,1,sb}_{\left[k+1,n\right]} = \mathbf{z}^{0,1,sb}_{\left[k+1,n\right]},\\
	\mathbf{x}: \langle \ \ \boxed{\ \ \ \ L \ \ \ \ } \ \ 0 \ \ 1 \ \ \boxed{\ \ \ \ R \ \ \ \ } \ \ \rangle,\\
	\mathbf{y}: \langle \ \ \boxed{\ \ \ \ L \ \ \ \ } \ \ 0 \ \ 1 \ \ \boxed{\ \ \ \ \phantom{R} \ \ \ \ } \ \ \rangle,\\
	\mathbf{z}: \langle \ \ \boxed{\ \ \ \ \phantom{L} \ \ \ \ } \ \ 0 \ \ 1 \ \ \boxed{\ \ \ \ R \ \ \ \ } \ \ \rangle.
	\end{gather*}
	However, since the corresponding vertices $\mathbf{v}(y)$ and $\mathbf{v}(z)$ of the polytope $\PSB(n)$ are adjacent, by Theorem~\ref{Theorem:adjacency}, the pyramidal encodings of the tours $y$ and $z$ also coincide either in the left part: $\mathbf{y}^{0,1,sb}_{\left[1,k\right]} = \mathbf{z}^{0,1,sb}_{\left[1,k\right]}$ (in this case $x = z$), or in the right part: $\mathbf{y}^{0,1,sb}_{\left[k+1,n\right]} = \mathbf{z}^{0,1,sb}_{\left[k+1 ,n\right]}$ (in this case $x = y$). We got a contradiction.
	
	Thus, for any pyramidal tour with step-backs from the set $W$, all $0\slash 1$-segments can be divided into \textit{left segments} (for which tours with a common segment coincide in the left part) and \textit{right segments} (for which tours coincide in the right part).
	
	Let us show that the left and right $0 \slash 1$-segments in tours from $W$ are ordered, i.e. if some tour $x \in W$ contains a left $0\slash 1$-segment on cities $k,k+1$ and a right $0\slash 1$-segment on cities $s,s+1$, then $k < s$.
	
	Assume that $k > s$. Consider a tour $y \in W$ that shares a left $0\slash 1$-segment on $k,k+1$ with $x$, then $\mathbf{x}^{0,1,sb}_{\left[1 ,k\right]} = \mathbf{y}^{0,1,sb}_{\left[1,k\right]}$.
	Hence the tours $x$ and $y$ coincide on the cities $s,s+1$ since $s < k$, and have common right $0\slash 1$-segments:
	\begin{align*}
	&\mathbf{x}: \langle \ \boxed{* \ \ * \ \ 0 \ \ 1 \ \ * \ \ *} \ \ \overset{L}{\overbracket{0 \ \ 1}} \ \ * \ \ * \ \ \rangle,\\
	&\mathbf{y}: \langle \ \ * \ \ * \ \ \underset{R}{\underbracket{0 \ \ 1}} \ \ \boxed{* \ \ * \ \ 0 \ \ 1 \ \ * \ \ *} \ \rangle.
	\end{align*}
	It remains to note that
	\[\mathbf{x}^{0,1,sb}_{\left[1,k\right]} = \mathbf{y}^{0,1,sb}_{\left[1,k\right]} \text{ and } \mathbf{x}^{0,1,sb}_{\left[s,n\right]} = \mathbf{y}^{0,1,sb}_{\left[s,n\right]},\]
	hence, $x = y$, a contradiction.

	\textit{Step $3$}.
	Consider a pyramidal tour with step-backs $x \in W$.
	We choose the left $0\slash 1$-segment $L_{\max}$ at the largest coordinates $i,i+1$. If there is no such segment, then we set $i=1$.
	We choose the right $0\slash 1$-segment $R_{\min}$ at the smallest coordinates $j-1,j$. If there is no such segment, then we set $j = n$.
	
	By construction, $i \leq j$ and cities from $i+1$ to $j-1$ are visited in the same direction.
	Let's call the part of pyramidal encoding $\left[i+1,j-1\right]$ a \textit{$0$-sequence} if the cities from $i+1$ to $j-1$ are visited in the descending order, and \textit{$1$-sequence} if the cities are visited in ascending order.
	
	Since the tours in $W$ coincide to the left of the common left $0\slash 1$-segment and to the right of the common right $0\slash 1$-segment, each pyramidal tour with step-backs $x \in W$ corresponds to a unique $0$-sequence or $1$-sequence. For example:
	\[
	\langle \ \ \underbracket{1 \ \ 0}_{L} \ \ 0 \ \ \underbracket{\vv{0 \ \ 0} \ \ 1}_{L} \ \ 1 \ \ \rlap{$\underbracket{\phantom{1 \ \ 0}}_{L_{\max}}$} 1 \ \ \overbracket{0 \ \ \vv{0 \ \ 0} \ \ 0 \ \ \vv{0 \ \ 0} \ \ \rlap{$\underbracket{\phantom{0 \ \ 1}}_{R_{\min}}$} 0}^{0\text{-sequence}} \ \ 1 \ \ \underbracket{\vv{1 \ \ 1} \ \ 0}_{R} \ \ \vv{0 \ \ 0} \ \ \rangle.
	\]
	
	\textit{Step $4$}.
	We consider in $W$ the subset of all pyramidal tours with step-backs that have a $0$-sequence starting at position $i$. Let's denote this subset as $W^{0}_i$.
	
	We consider in the set $W^{0}_i$ all tours containing at least one unique $0$-coordinate inside the $0$-sequence in the pyramidal encoding. There are $n-i$ possible positions of the unique $0$-coordinate, which can take $3$ different values: start of a step-back, end of a step-back, not a step-back. Thus, the total number of such unique tours in $W^{0}_i$ does not exceed $3(n-i)=O(n)$. Let us construct the set $\bar{W}^{0}_i$ by excluding from $W^{0}_i$ all tours with unique coordinates in the $0$-sequence.

	Consider some tour $x \in \bar{W}^{0}_{i}$. By construction, for any coordinate $s$ within the $0$-sequence, there exists a second tour $y \in \bar{W}^{0}_{i}$ such that $\mathbf{x}^{0,sb}_{s} = \mathbf{y}^{0,sb}_s$ and $s$ belongs to the $0$-sequence of $y$.
	Then the tours $x$ and $y$ on the coordinates $i-1$ (segment $L_{\max}$) form a block $U_{11}$ (or $U_{1111}$), and on the coordinate $s$ -- one of the blocks $U_{00}$ or $U_{0000}$.
	Therefore, by Theorem~\ref{Theorem:adjacency}, the pyramidal encodings of the tours $x$ and $y$ coincide either in the central part between $i-1$ and $s$ in descending order ($\mathbf{x}^{0,sb}_{\left[i, s\right]} = \mathbf{y}^{0,sb}_{\left[i,s\right]}$) or to the right of $s$ ($\mathbf{x}^{0,1,sb}_{\left [s,n\right]} = \mathbf{y}^{0,1,sb}_{\left[s,n\right]}$).
	For instance:
	\begin{align*}
	&\mathbf{x}: \langle \ \boxed{0 \ \ \lvv{1 \ \ 1}} \ \overbracket{1}^{i-1} \ \boxed{0 \ \ \vv{0 \ \ 0}} \ \ \overbracket{0}^{s} \ \ 0 \ \ 0 \ \ 1 \ \ 0 \ \ \lvv{1 \ \ 1} \ \ \rangle,\\
	&\mathbf{y}: \langle \ \boxed{0 \ \ \lvv{1 \ \ 1}} \ \underbracket{1}_{i-1} \ \boxed{0 \ \ \vv{0 \ \ 0}} \ \ \underbracket{0}_{s} \ \ \vv{0 \ \ 0} \ \ 0 \ \ 0 \ \  1 \ \ 1 \ \ \rangle.
	\end{align*}
	Otherwise, the corresponding vertices $\mathbf{v}(x)$ and $\mathbf{v}(y)$ of the polytope $\PSB(n)$ are not adjacent.
	
	Further reasoning completely repeats similar ones for common $0 \slash 1$-segments. For any subset of tours in $\bar{W}^{0}_{i}$ with a common $0$-coordinate, the coinciding parts of the encoding must be on the same side of the coordinate.
	This allows us to divide all $0$-sequence coordinates into \textit{left $0$-coordinates} (tours with a common coordinate coincide on the left side) and \textit{right $0$-coordinates} (for which tours coincide to the right of the common coordinate). Note also that all coordinates are ordered, i.e. if $s$ is the left 0-coordinate and $t$ is the right 0-coordinate, then $s < t$:
	\[
	\langle \ \boxed{0 \ \ \lvv{1 \ \ 1}} \ \overbracket{1}^{i-1} \ \underbracket{\overbracket{0 \ \ \vv{0 \ \ 0} \ \ 0}^{L} \ \ \overbracket{\vv{0 \ \ 0} \ \ 0}^{R}}_{0\text{-sequence}} \ \ 1 \ \ \vv{1 \ \ 1}  \ \ 0 \ \ \rangle.
	\]
	
	We denote by $\bar{W}^{0}_{i,s}$ the subset of all tours from $\bar{W}^{0}_i$ for which $s$ is the number of the largest left $0$-coordinate, and $s+1$ is the number of the smallest right $0$-coordinate.
	
	The key idea is that the coordinates $s$ and $s+1$ of any tour from $\bar{W}^{0}_{i,s}$ can take only 2 values: $\langle 0 \rangle $ and $\langle \lvv{0 \ \ 0} \rangle$. 
	Moreover, the values of this pair of coordinates uniquely determine the tour since tours with the same left coordinate coincide to the left and with the same right coordinate -- to the right.
	Thus, the subset $\bar{W}^{0}_{i,s}$ contains at most 4 tours.
	
	And now we rise back to the original set of pairwise adjacent tours.
	Firstly,
	\[\bar{W}^{0}_i = \bigcup^{n-1}_{s = i+1} \bar{W}^{0}_{i,s},\]
	whence, with the excluded tours with unique $0$-coordinates, $|W^0_i| = O(n)$.
	Similarly, we get that $|W^1_i| = O(n)$.
	
	Secondly,
	\[ W = \left( \bigcup^{n-1}_{i = 2} W^{0}_{i}\right)  \cup \left( \bigcup^{n-1}_{i = 2} W^{1}_{i}\right) ,\]
	whence we get that $|W| = O(n^2)$.
	
	Finally, returning the excluded tours with unique pairs of neighboring coordinates, we arrive at the upper bound $|Y| = |W| + O(n) = O(n^2)$. Thus,
	\[\omega(\PSB(n)) = O(n^2).\]

	\textit{Lower bound.}
	We construct an example of the set $Z^{\mathbf{v}}$ of pairwise adjacent vertices of the polytope $\PSB(n)$ such that $|Z^{\mathbf{v}}| = \Omega (n^2)$.
	Let $n$ be even. With each pair of integers $k,s$ such that $0 \leq k,s \leq \frac{n-2}{2}$ we associate a pyramidal tour with step-backs $x(k,s) \in Z$ such that
	\begin{itemize}
		\item $\forall i$ $(2 \leq i \leq k+1)$: $\mathbf{x}^{0,1,sb}_i = 1$;
		\item $\forall j$ $(n-s \leq j \leq n-1)$: $\mathbf{x}^{0,1,sb}_j = 1$;
		\item all other coordinates of $\mathbf{x}^{0,1,sb}$ are equal to $0$.
	\end{itemize}

	The tours from the set $Z$ do not contain step-backs, therefore, they cannot differ in the central part between the left and right blocks. Moreover, if for two tours $x,y \in Z$ along some coordinate $i$: $\mathbf{x}^{0,1,sb}_i = \mathbf{y}^{0,1,sb}_i = 1$, then
	\begin{itemize}
		\item if $i \leq \frac{n}{2}$, then the encodings coincide in the left part (i.e.  $\mathbf{x}^{0,1,sb}_{[1,i]} = \mathbf{y}^{0,1,sb}_{[1,i]}$);
		\item if $i \geq \frac{n}{2}+1$, then the pyramidal encodings coincide in the right part (i.e. $\mathbf{x}^{0,1,sb}_{[i,n]} = \mathbf{y}^{0,1,sb}_{[i,n]}$).
	\end{itemize}
	Hence, by Theorem~\ref{Theorem:adjacency}, the corresponding vertices $\mathbf{v}(x)$ and $\mathbf{v}(y)$ of the polytope $\PSB(n)$ are adjacent.
	
	The case of odd $n$ is reduced to an even case, it suffices to fix one of the cities in all tours from $Z$ in ascending or descending order. Thus,
	\[ \omega(\PSB(n)) \geq	|Z| = \left\lfloor \frac{n}{2} \right\rfloor^2 = \Omega (n^2). \]
	
	An example of a set $Z$ for $n = 8$ is given in Table~\ref{Table:Lower_bound}.
	
	\begin{table}[t]
		\centering
		\caption{An example of pyramidal tours with step-backs with pairwise adjacent vertices in the polytope $\PSB(8)$}
		\label{Table:Lower_bound}
	\begin{tabular}{cccc}
		~$\left\langle 0 \ 0 \ 0 \ | \ 0 \ 0 \ 0 \right\rangle$~ & ~$\left\langle 0 \ 0 \ 0 \ | \ 0 \ 0 \ 1 \right\rangle$~ & ~$\left\langle 0 \ 0 \ 0 \ | \ 0 \ 1 \ 1 \right\rangle$~ & ~$\left\langle 0 \ 0 \ 0 \ | \ 1 \ 1 \ 1 \right\rangle$~ \\
		$\left\langle 1 \ 0 \ 0 \ | \ 0 \ 0 \ 0 \right\rangle$ & $\left\langle 1 \ 0 \ 0 \ | \ 0 \ 0 \ 1 \right\rangle$ & $\left\langle 1 \ 0 \ 0 \ | \ 0 \ 1 \ 1 \right\rangle$ & $\left\langle 1 \ 0 \ 0 \ | \ 1 \ 1 \ 1 \right\rangle$\\
		$\left\langle 1 \ 1 \ 0 \ | \ 0 \ 0 \ 0 \right\rangle$ & $\left\langle 1 \ 1 \ 0 \ | \ 0 \ 0 \ 1 \right\rangle$ & $\left\langle 1 \ 1 \ 0 \ | \ 0 \ 1 \ 1 \right\rangle$ & $\left\langle 1 \ 1 \ 0 \ | \ 1 \ 1 \ 1 \right\rangle$\\
		$\left\langle 1 \ 1 \ 1 \ | \ 0 \ 0 \ 0 \right\rangle$ & $\left\langle 1 \ 1 \ 1 \ | \ 0 \ 0 \ 1 \right\rangle$ & $\left\langle 1 \ 1 \ 1 \ | \ 0 \ 1 \ 1 \right\rangle$ & $\left\langle 1 \ 1 \ 1 \ | \ 1 \ 1 \ 1 \right\rangle$\\
	\end{tabular}
	\end{table}

Combining the upper and lower bounds, we obtain the desired asymptotically exact quadratic estimate (\ref{Equation:clique_number}).
\end{proof}

\section{Conclusion}

The results of the research are summarized in Table~\ref{Table:Results}.

\begin{table}[h]
	\centering
	\caption{Properties of the $\operatorname{ATSP}(n)$, $\operatorname{PYR}(n)$, and $\PSB(n)$ polytopes}
	\label{Table:Results}
	\resizebox{\textwidth}{!}{
	\begin{tabular}{c|c|c|c|c}
		& \begin{tabular}{@{}c@{}}~Complexity of~ \\ TSP problem\end{tabular} & \begin{tabular}{@{}c@{}}~Vertex adjacency~ \\ in 1-skeleton\end{tabular} & \begin{tabular}{@{}c@{}}Diameter \\ of 1-skeleton\end{tabular} & 		\begin{tabular}{@{}c@{}}~Clique number~ \\ of 1-skeleton\end{tabular}  \\ \hline
		\begin{tabular}{@{}c@{}}Hamiltonian cycles \\ $\operatorname{ATSP}(n)$ and $\operatorname{TSP}(n)$\end{tabular} & NP-hard & co-NP-complete & \begin{tabular}{@{}c@{}} 2 for $\operatorname{ATSP}(n)$ \\ $\leq 4$ for $\operatorname{TSP}(n)$\end{tabular} & $\Omega\left( 2^{\left(\sqrt {n} - 9\right) \slash 2}\right) $ \\ \hline
		\begin{tabular}{@{}c@{}}Pyramidal tours~  \\ $\operatorname{PYR}(n)$ \end{tabular} & $O(n^2)$ & $O(n)$ & 2 & $\Theta(n^2)$ \\ \hline
		~\begin{tabular}{@{}c@{}}Pyramidal tours with \\ step-backs $\PSB(n)$\end{tabular}~ & $O(n^2)$ & $O(n)$ & $\leq 4$ & $\Theta(n^2)$
	\end{tabular}
}
\end{table}

We have considered several versions of the traveling salesperson problem and the associated combinatorial polytopes.

The general traveling salesperson problem is NP-hard \cite{Garey1979}. The question of whether two vertices of the traveling salesperson polytope are not adjacent is NP-complete \cite{Papadimitriou1978}. 
The clique number of 1-skeleton of the polytope $\operatorname{ATSP}(n)$ is superpolynomial in $n$ \cite{Bondarenko1983}. 

On the other hand, the traveling salesperson problem on pyramidal tours and pyramidal tours with step-backs is solvable in polynomial time by dynamic programming \cite{Enomoto1998,Gilmore1985}. 
The vertex adjacency for the polytopes $\operatorname{PYR}(n)$ and $\operatorname{PSB}(n)$ can be verified in linear time $O(n)$. The clique numbers of 1-skeletons are quadratic in $n$.

Thus, the properties of 1-skeletons of the polytopes associated with the traveling salesperson problem directly correlate with the complexity of the problem itself.


%
%
%
 \bibliographystyle{splncs04}
 \bibliography{Nikolaev_MOTOR2022}

\end{document}